\documentclass[11pt]{article}

\usepackage[letterpaper,margin=1.1in]{geometry}
\usepackage[T1]{fontenc}
\usepackage{amsmath,amssymb,amsthm}
\usepackage{newpxtext}
\usepackage{newpxmath}
\usepackage{mathtools}
\usepackage{microtype}
\usepackage[hidelinks]{hyperref}
\usepackage[nameinlink,noabbrev]{cleveref}
\usepackage{enumitem}

\numberwithin{equation}{section}
\allowdisplaybreaks

\newtheorem{theorem}{Theorem}[section]
\newtheorem{proposition}[theorem]{Proposition}

\newtheorem{corollary}[theorem]{Corollary}
\theoremstyle{definition}
\newtheorem{definition}[theorem]{Definition}
\newtheorem{example}[theorem]{Example}
\theoremstyle{remark}
\newtheorem{remark}[theorem]{Remark}

\crefname{theorem}{theorem}{theorems}
\crefname{proposition}{proposition}{propositions}
\crefname{lemma}{lemma}{lemmas}
\crefname{corollary}{corollary}{corollaries}
\crefname{definition}{definition}{definitions}
\crefname{example}{example}{examples}
\crefname{remark}{remark}{remarks}

\newcommand{\SP}{\mathrm{SP}}

\newcommand{\cQ}{\mathcal{Q}}
\newcommand{\Ht}{\mathsf{H}}
\newcommand{\Pf}{\operatorname{Pf}}

\newcommand{\ZZ}{\mathbb{Z}}
\newcommand{\EE}{\mathbb{E}}
\newcommand{\Var}{\operatorname{Var}}
\newcommand{\Prob}{\mathbb{P}}

\title{A Two-Color Lift of the Shifted \texorpdfstring{$t$}{t}-Schur Measure}
\author{S.-J. Lee}
\date{}

\begin{document}

\maketitle

\begin{abstract}
At the specialization $t=-q$, $q\geq0$, the shifted $t$-Schur
function associated with the modified odd Greaves--Jing--Zhu operator is
$Q_\lambda[X+qX]$.  Instead of merging the two alphabets $X$ and $qX$, we
insert an intermediate strict partition between the two corresponding
half-vertex operators.  This gives a two-color lift of the shifted Schur
measure on pairs $\mu\subseteq\lambda$ with weight
\[
        Q_\mu(qX)Q_{\lambda/\mu}(X)P_\lambda(Y).
\]
We compute the normalization and both marginals, identify an explicit
Markov transition kernel, prove a semigroup property, and show that the
two color volumes $|\mu|$ and $|\lambda|-|\mu|$ are independent.  We also
realize the model as a two-time shifted Schur process and write its
Pfaffian correlation kernel in Vuleti\'c's convention.  Rectangular
specializations give closed formulas and Gaussian limits for the color
volumes.
\end{abstract}

\tableofcontents

\section{Introduction}

The modified odd Greaves--Jing--Zhu operator was introduced in
\cite{Lee} as an odd-power-sum analogue of the Greaves--Jing--Zhu
construction \cite{GJZ}.  In that normalization, the shifted $t$-Schur
weight has one-time distribution
\begin{equation}\label{eq:intro-one-time}
        \Prob_{-q}^{X,Y}(\lambda)
        =\frac{Q_\lambda[X+qX]P_\lambda(Y)}{\Ht(X+qX;Y)},
        \qquad q\geq0.
\end{equation}
As a distribution of the final strict partition $\lambda$, this is simply
the shifted Schur measure with first specialization $X+qX$.  Hence the
one-time Pfaffian structure follows from the classical shifted Schur
measure \cite{TW,Matsumoto}.

The modified operator, however, contains more information.  Since only
odd power sums occur,
\[
        1-(-q)^n=1+q^n\qquad(n\text{ odd}),
\]
and the creation half of the modified operator factorizes as
\begin{equation}\label{eq:intro-factorization}
        \Gamma_-^{(-q)}(X)=\Gamma_-(X)\Gamma_-(qX).
\end{equation}
If the two factors in \eqref{eq:intro-factorization} are multiplied
without further structure, one obtains \eqref{eq:intro-one-time}.  If an
intermediate strict partition is retained, one obtains a two-color
object
\begin{equation}\label{eq:intro-pair}
        \mu\subseteq\lambda.
\end{equation}
The first color occupies the shape $\mu$, and the second color occupies
the skew shape $\lambda/\mu$.

The central object of this note is the probability measure
\begin{equation}\label{eq:intro-measure}
        \Prob_q^{X,Y}(\mu,
        \lambda)
        =\frac{Q_\mu(qX)Q_{\lambda/\mu}(X)P_\lambda(Y)}{\Ht(qX;Y)\Ht(X;Y)},
        \qquad \mu\subseteq\lambda.
\end{equation}
The $\lambda$-marginal is \eqref{eq:intro-one-time}, but the joint law
has additional structure.  It admits the Markov factorization
\begin{equation}\label{eq:intro-markov-factorization}
        \Prob_q^{X,Y}(\mu,\lambda)
        =\pi_{qX,Y}(\mu)K_X^Y(\mu,\lambda),
\end{equation}
where
\begin{equation}\label{eq:intro-K}
        K_X^Y(\mu,\lambda)
        =\frac{Q_{\lambda/\mu}(X)P_\lambda(Y)}{\Ht(X;Y)P_\mu(Y)}.
\end{equation}
Moreover,
\begin{equation}\label{eq:intro-semigroup}
        K_X^Y K_Z^Y=K_{X+Z}^Y.
\end{equation}
This semigroup relation is the probabilistic form of the half-vertex
operator identity $\Gamma_-(X)\Gamma_-(Z)=\Gamma_-(X+Z)$.

The color-volume statistics are especially simple.  Define
\begin{equation}\label{eq:intro-color-volumes}
        B=|\mu|,
        \qquad
        R=|\lambda|-|\mu|.
\end{equation}
Then
\begin{equation}\label{eq:intro-BR-pgf}
        \EE[u^Bv^R]
        =\frac{\Ht(uqX;Y)}{\Ht(qX;Y)}
        \frac{\Ht(vX;Y)}{\Ht(X;Y)}.
\end{equation}
Thus $B$ and $R$ are independent.  Their cumulants are
\begin{equation}\label{eq:intro-B-cumulants}
        \kappa_m(B)
        =2\sum_{\substack{n\geq1\\n\text{ odd}}}
        n^{m-1}q^n p_n(X)p_n(Y),
\end{equation}
\begin{equation}\label{eq:intro-R-cumulants}
        \kappa_m(R)
        =2\sum_{\substack{n\geq1\\n\text{ odd}}}
        n^{m-1}p_n(X)p_n(Y).
\end{equation}

Finally, the joint process is a two-time specialization of Vuleti\'c's
shifted Schur process \cite{Vuletic}.  Therefore the point process
\begin{equation}\label{eq:intro-two-time-point-process}
        \mathfrak X(\mu,\lambda)
        =\{(k,1):k\in\mu\}\cup\{(k,2):k\in\lambda\}
\end{equation}
is Pfaffian.  The time symbols are
\begin{equation}\label{eq:intro-J1J2}
        J_1(z)=F_Y(z)F_{qX}(z^{-1}),
        \qquad
        J_2(z)=F_Y(z)F_{qX}(z^{-1})F_X(z^{-1}).
\end{equation}
The precise kernel is stated in \cref{sec:pfaffian}.

\section{Preliminaries}

We use the standard notation for Schur $Q$- and $P$-functions; see
\cite[Chapter III, Section 8]{Macdonald} and \cite{Stembridge}.  Let
$\SP$ denote the set of strict partitions.  We use the notation
$\mu\subseteq\lambda$ to mean $\mu_i\leq\lambda_i$ for all $i$, after
appending zero parts.  For an alphabet or specialization $A$, set
\begin{equation}\label{eq:FA}
        F_A(z)
        =\prod_{a\in A}\frac{1+az}{1-az}
        =\exp\left(
        2\sum_{\substack{n\geq1\\n\text{ odd}}}
        \frac{p_n(A)}{n}z^n
        \right).
\end{equation}
The Schur $Q$- and $P$-functions are normalized by
\begin{equation}\label{eq:PQ-normalization}
        P_\lambda=2^{-\ell(\lambda)}Q_\lambda.
\end{equation}
The Schur $Q/P$ Cauchy kernel is
\begin{equation}\label{eq:HAB}
        \Ht(A;B)
        =\sum_{\lambda\in\SP}Q_\lambda(A)P_\lambda(B)
        =\exp\left(
        2\sum_{\substack{n\geq1\\n\text{ odd}}}
        \frac{p_n(A)p_n(B)}{n}\right).
\end{equation}
For finite alphabets,
\begin{equation}\label{eq:HAB-product}
        \Ht(A;B)=\prod_{a\in A,b\in B}\frac{1+ab}{1-ab}.
\end{equation}

We shall use the skew $Q$-functions determined by the branching identity
\begin{equation}\label{eq:branching}
        Q_\lambda(A+B)
        =\sum_{\mu\subseteq\lambda}Q_{\lambda/\mu}(A)Q_\mu(B).
\end{equation}
The corresponding skew Cauchy identity is
\begin{equation}\label{eq:skew-cauchy}
        \sum_{\lambda\supseteq\mu}Q_{\lambda/\mu}(A)P_\lambda(B)
        =\Ht(A;B)P_\mu(B).
\end{equation}

We use half-vertex-operator notation only through its matrix elements.
The creation operator $\Gamma_-(A)$ is characterized by
\begin{equation}\label{eq:Gamma-matrix-elements}
        \langle\lambda|\Gamma_-(A)|\mu\rangle=Q_{\lambda/\mu}(A),
        \qquad
        \langle\mu|\Gamma_-(A)|0\rangle=Q_\mu(A).
\end{equation}
This convention is compatible with the neutral-fermion realization of
Schur $Q$-functions \cite{Jing} and with the modified odd operator
construction of \cite{Lee,LeeMixed}.

\begin{remark}\label{rem:formal-vs-probability}
All identities below hold algebraically for specializations for which the
series are meaningful.  For probabilistic language we assume $q\geq0$ and
finite nonnegative alphabets $X=(x_i)$ and $Y=(y_j)$ satisfying
\begin{equation}\label{eq:convergence}
        \max\{1,q\}x_i y_j<1\qquad\text{for every }i,j.
\end{equation}
This ensures that all displayed normalizing products are finite and that
the weights are nonnegative.
\end{remark}

\section{Two-color factorization}

In the normalization of \cite{Lee,LeeTransition}, the shifted
$t$-Schur function is obtained from $Q_\lambda$ by the plethystic change
$X\mapsto X-tX$.  Therefore, at $t=-q$,
\begin{equation}\label{eq:Qt-negative-q}
        \cQ_\lambda(X;-q)=Q_\lambda[X+qX].
\end{equation}
Indeed, for each odd $n$,
\[
        p_n[X-(-q)X]=(1+q^n)p_n(X)=p_n[X+qX].
\]
On the half-vertex-operator level this is the factorization
\begin{equation}\label{eq:half-vertex-factorization}
        \Gamma_-^{(-q)}(X)=\Gamma_-(X)\Gamma_-(qX),
\end{equation}
which is the $t=-q$ case of the diagonal odd-power-sum scaling appearing
in the modified operator construction \cite{Lee,LeeMixed}.

\begin{proposition}[Intermediate strict partition]\label{prop:intermediate}
For every strict partition $\lambda$,
\begin{equation}\label{eq:intermediate-branching}
        Q_\lambda[X+qX]
        =\sum_{\mu\subseteq\lambda}Q_\mu(qX)Q_{\lambda/\mu}(X).
\end{equation}
\end{proposition}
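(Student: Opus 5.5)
This is the branching identity \eqref{eq:branching} applied with $A=X$ and $B=qX$. The only thing to check is that the left side $Q_\lambda[X+qX]$ agrees with $Q_\lambda(A+B)$ for these alphabets. In \eqref{eq:branching} the symbol $A+B$ means the union of alphabets, equivalently the specialization with $p_n(A+B)=p_n(A)+p_n(B)$. The plethystic bracket $[X+qX]$ in \eqref{eq:Qt-negative-q} is defined by $p_n[X+qX]=(1+q^n)p_n(X)$ for odd $n$. Since $p_n(qX)=q^np_n(X)$, the two specializations agree on every odd power sum. Schur $Q$-functions lie in the ring generated by the odd power sums, so the two specializations of $Q_\lambda$ coincide.

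The steps, in order, are as follows.

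First, I will record that $Q_\lambda$, and every skew $Q_{\lambda/\mu}$, is a polynomial in $p_1,p_3,p_5,\dots$. Consequently, evaluating at any specialization depends only on the values of the odd power sums.

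Second, I will identify $[X+qX]$ with the union alphabet $X\cup qX$, where $qX=(qx_i)$, by comparing odd power sums as above.

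Third, I will substitute $A=X$ and $B=qX$ into \eqref{eq:branching}. This gives $\sum_{\mu\subseteq\lambda}Q_{\lambda/\mu}(X)Q_\mu(qX)$, which is exactly the right side of \eqref{eq:intermediate-branching}.

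As a consistency check, I can also read the identity off the half-vertex factorization \eqref{eq:half-vertex-factorization}. Insert a resolution of the identity $\sum_\mu|\mu\rangle\langle\mu|$ between the two factors and use \eqref{eq:Gamma-matrix-elements}:
\[
\langle\lambda|\Gamma_-(X)\Gamma_-(qX)|0\rangle=\sum_{\mu}Q_{\lambda/\mu}(X)Q_\mu(qX).
\]
Here $Q_{\lambda/\mu}=0$ unless $\mu\subseteq\lambda$. The left side is $\langle\lambda|\Gamma_-^{(-q)}(X)|0\rangle=Q_\lambda[X+qX]$.

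The main obstacle is bookkeeping rather than substance. One must make sure the skew functions in \eqref{eq:branching} are normalized with $A$ in the skew slot and $B$ in the inner slot, as the statement requires. One must also make sure the even power sums play no role. If $qX$ were read as a genuine plethystic scalar rather than as the rescaled alphabet, the even power sums would differ, but $Q$-functions never see them. The sum is finite because $\mu\subseteq\lambda$ ranges over finitely many strict partitions, so no convergence issue arises.
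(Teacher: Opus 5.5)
Your proposal is correct and follows the paper's proof. You apply the branching identity with $A=X$ and $B=qX$, and you give the same equivalent derivation by inserting $\sum_\mu|\mu\rangle\langle\mu|$ between $\Gamma_-(X)$ and $\Gamma_-(qX)$; your extra check that $[X+qX]$ matches the union alphabet on odd power sums is the observation the paper records just before the proposition.
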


\begin{proof}
This is the branching identity \eqref{eq:branching} with $A=X$ and
$B=qX$.  Equivalently, using \eqref{eq:Gamma-matrix-elements}, one inserts
the identity operator between the two commuting creation operators in
\eqref{eq:half-vertex-factorization}:
\[
\begin{aligned}
        \langle \lambda|\Gamma_-(X)\Gamma_-(qX)|0\rangle
        &=\sum_{\mu\in\SP}
        \langle \lambda|\Gamma_-(X)|\mu\rangle
        \langle \mu|\Gamma_-(qX)|0\rangle\\
        &=\sum_{\mu\subseteq\lambda}Q_{\lambda/\mu}(X)Q_\mu(qX).
\end{aligned}
\]
\end{proof}

\begin{definition}\label{def:colors}
In the pair $\mu\subseteq\lambda$, we call $\mu$ the blue shape and
$\lambda/\mu$ the red skew shape.  The words blue and red are external
color labels.  They are independent of the usual primed and unprimed
entries appearing in shifted semistandard tableaux.
\end{definition}

\section{The joint measure and its marginals}

\begin{definition}\label{def:joint-measure}
Assume the positivity and convergence conditions in \cref{rem:formal-vs-probability}.  Define
\begin{equation}\label{eq:joint-measure}
        \Prob_q^{X,Y}(\mu,\lambda)
        =\frac{Q_\mu(qX)Q_{\lambda/\mu}(X)P_\lambda(Y)}{Z_q(X,Y)},
        \qquad \mu\subseteq\lambda.
\end{equation}
Set the weight equal to zero when $\mu\nsubseteq\lambda$.
\end{definition}

\begin{theorem}[Normalization]\label{thm:normalization}
The normalizing constant is
\begin{equation}\label{eq:Zq}
        Z_q(X,Y)=\Ht(qX;Y)\Ht(X;Y)=\Ht(X+qX;Y).
\end{equation}
For finite alphabets this is
\begin{equation}\label{eq:Zq-product}
        Z_q(X,Y)
        =\prod_{i,j}
        \frac{1+x_i y_j}{1-x_i y_j}
        \frac{1+q x_i y_j}{1-q x_i y_j}.
\end{equation}
\end{theorem}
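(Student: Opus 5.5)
The plan is to sum the weight in \eqref{eq:joint-measure} over all admissible pairs $\mu\subseteq\lambda$. We sum over $\lambda$ first with $\mu$ held fixed, and then over $\mu$. This gives
\[
        Z_q(X,Y)=\sum_{\mu\in\SP}Q_\mu(qX)\sum_{\lambda\supseteq\mu}Q_{\lambda/\mu}(X)P_\lambda(Y).
\]
The inner sum is exactly the left side of the skew Cauchy identity \eqref{eq:skew-cauchy} with $A=X$, $B=Y$, so it equals $\Ht(X;Y)P_\mu(Y)$. Pulling out this factor leaves
\[
        \Ht(X;Y)\sum_{\mu\in\SP}Q_\mu(qX)P_\mu(Y).
\]
By the Cauchy kernel definition \eqref{eq:HAB} with $A=qX$, the remaining sum is $\Ht(qX;Y)$. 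This proves the first equality in \eqref{eq:Zq}.

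For the second equality there are two routes. The first uses the exponential form in \eqref{eq:HAB}. Power sums are additive, $p_n(X+qX)=p_n(X)+p_n(qX)=(1+q^n)p_n(X)$, so the exponents add and $\Ht(qX;Y)\Ht(X;Y)=\Ht(X+qX;Y)$. The second route swaps the order of summation: sum over $\mu$ first using \cref{prop:intermediate}, which gives $\sum_\lambda Q_\lambda[X+qX]P_\lambda(Y)$, and then apply \eqref{eq:HAB} directly. Either route works; I will present the first and mention the second as a consistency check. The product formula \eqref{eq:Zq-product} then follows from \eqref{eq:HAB-product} applied to the finite alphabets $qX=(qx_i)$ and $Y$, and to $X$ and $Y$.

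The only real obstacle is justifying the rearrangement of the double sum. Under \cref{rem:formal-vs-probability}, all terms are nonnegative: $Q_\mu(qX)$, $Q_{\lambda/\mu}(X)$ and $P_\lambda(Y)$ are monomial-positive, via their shifted tableau expansions, and are evaluated at nonnegative variables. Tonelli's theorem therefore allows summing in any order. Finiteness of the result comes from \eqref{eq:convergence}: each factor $(1+ab)/(1-ab)$ has $ab<1$, so every product in \eqref{eq:Zq-product} is finite. At the formal level, no justification is needed beyond grading by total degree, since each homogeneous component of the double sum is a finite sum.
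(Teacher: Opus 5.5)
Your proof is correct, but it sums in the opposite order from the paper. The paper's main route is the one you list as a consistency check. It sums over $\mu$ first, using \cref{prop:intermediate} to collapse $\sum_{\mu\subseteq\lambda}Q_\mu(qX)Q_{\lambda/\mu}(X)$ into $Q_\lambda[X+qX]$. It then applies the ordinary Cauchy identity to get $\Ht(X+qX;Y)$, and only afterwards splits this as $\Ht(X;Y)\Ht(qX;Y)$ via the exponential form.

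You instead sum over $\lambda$ first with the skew Cauchy identity \eqref{eq:skew-cauchy}. That lands directly on the factored form $\Ht(X;Y)\Ht(qX;Y)$, and the exponential form is needed only to merge the two factors.

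The paper's order has two small advantages. Its inner sum over $\mu\subseteq\lambda$ is finite, so no interchange of infinite sums arises. It also exhibits the $\lambda$-marginal as the shifted Schur measure with alphabet $X+qX$ along the way.

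Your order does not need \cref{prop:intermediate}. Its inner sum over $\lambda\supseteq\mu$ is infinite, so your Tonelli argument is genuinely needed, and you supply it. In exchange, your computation is exactly the one behind the $\mu$-marginal in \cref{thm:marginals} and the row sums in \cref{thm:markov}. It therefore already anticipates the Markov factorization \eqref{eq:joint-markov-factorization}.
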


\begin{proof}
Using \cref{prop:intermediate},
\[
\begin{aligned}
        \sum_{\mu\subseteq\lambda}
        Q_\mu(qX)Q_{\lambda/\mu}(X)P_\lambda(Y)
        &=Q_\lambda[X+qX]P_\lambda(Y).
\end{aligned}
\]
Summing over $\lambda$ and applying the Cauchy identity gives
\[
        Z_q(X,Y)=\Ht(X+qX;Y).
\]
Since $p_n(X+qX)=p_n(X)+p_n(qX)$, the exponential form of $\Ht$ gives
\[
        \Ht(X+qX;Y)=\Ht(X;Y)\Ht(qX;Y).
\]
The product formula follows from \eqref{eq:HAB-product}.
\end{proof}

\begin{theorem}[Marginals]\label{thm:marginals}
The $\lambda$-marginal is
\begin{equation}\label{eq:lambda-marginal}
        \Prob_q^{X,Y}(\lambda)
        =\frac{Q_\lambda[X+qX]P_\lambda(Y)}{\Ht(X+qX;Y)}.
\end{equation}
The $\mu$-marginal is
\begin{equation}\label{eq:mu-marginal}
        \Prob_q^{X,Y}(\mu)
        =\frac{Q_\mu(qX)P_\mu(Y)}{\Ht(qX;Y)}.
\end{equation}
\end{theorem}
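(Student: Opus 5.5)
Both marginals come from summing the joint weight in \cref{def:joint-measure} over one of the two partitions and using the normalization $Z_q(X,Y)=\Ht(qX;Y)\Ht(X;Y)$ from \cref{thm:normalization}. The two computations are independent, one for each marginal.

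For the $\lambda$-marginal, I fix $\lambda\in\SP$ and sum over $\mu\subseteq\lambda$. The factor $P_\lambda(Y)$ does not depend on $\mu$, so I pull it out. By \cref{prop:intermediate}, the remaining sum $\sum_{\mu\subseteq\lambda}Q_\mu(qX)Q_{\lambda/\mu}(X)$ equals $Q_\lambda[X+qX]$. Dividing by $Z_q(X,Y)=\Ht(X+qX;Y)$ gives \eqref{eq:lambda-marginal}. Nothing more is needed here.

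For the $\mu$-marginal, I fix $\mu$ and sum over $\lambda\supseteq\mu$. The factor $Q_\mu(qX)$ comes out of the sum. The skew Cauchy identity \eqref{eq:skew-cauchy} with $A=X$, $B=Y$ gives
\[
\sum_{\lambda\supseteq\mu}Q_{\lambda/\mu}(X)P_\lambda(Y)=\Ht(X;Y)P_\mu(Y).
\]
Hence the marginal equals $Q_\mu(qX)\Ht(X;Y)P_\mu(Y)/(\Ht(qX;Y)\Ht(X;Y))$. The factor $\Ht(X;Y)$ cancels, leaving \eqref{eq:mu-marginal}.

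The only point needing care is the analytic justification. We must be allowed to sum over infinitely many $\lambda$, and the skew Cauchy identity must hold numerically and not just formally. I handle this with \cref{rem:formal-vs-probability}. Under \eqref{eq:convergence} all terms are nonnegative, since $Q_{\lambda/\mu}$ and $P_\lambda$ are monomial-positive and the specializations are nonnegative. So Tonelli's theorem permits rearranging the sums. The total mass is finite by \cref{thm:normalization}, so every partial sum converges. As a consistency check, summing either marginal over its variable recovers $1$: for $\lambda$ by the Cauchy identity \eqref{eq:HAB} with $A=X+qX$, and for $\mu$ by \eqref{eq:HAB} with $A=qX$.
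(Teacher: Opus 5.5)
Your proposal is correct and follows the paper's proof exactly: the $\lambda$-marginal comes from \cref{prop:intermediate}, and the $\mu$-marginal comes from the skew Cauchy identity \eqref{eq:skew-cauchy}, after which the factor $\Ht(X;Y)$ cancels. Your Tonelli argument, based on nonnegativity of the terms, is a valid extra justification that the paper leaves implicit in \cref{rem:formal-vs-probability}.
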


\begin{proof}
The first formula follows by summing \eqref{eq:joint-measure} over
$\mu$ and using \eqref{eq:intermediate-branching}.  For the second,
fix $\mu$ and use the skew Cauchy identity \eqref{eq:skew-cauchy}:
\[
\begin{aligned}
        \sum_{\lambda\supseteq\mu}
        Q_{\lambda/\mu}(X)P_\lambda(Y)
        =\Ht(X;Y)P_\mu(Y).
\end{aligned}
\]
Therefore
\[
\begin{aligned}
        \Prob_q^{X,Y}(\mu)
        &=\frac{Q_\mu(qX)}{\Ht(qX;Y)\Ht(X;Y)}
        \sum_{\lambda\supseteq\mu}Q_{\lambda/\mu}(X)P_\lambda(Y)\\
        &=\frac{Q_\mu(qX)P_\mu(Y)}{\Ht(qX;Y)}.
\end{aligned}
\]
\end{proof}

\begin{remark}\label{rem:not-new-one-time}
The one-time marginal \eqref{eq:lambda-marginal} is the shifted Schur
measure with first alphabet $X+qX$.  The additional object in this note
is the joint lift $(\mu,\lambda)$ and the color information it retains.
\end{remark}

\section{A Markov kernel}

\begin{definition}\label{def:markov-kernel}
For strict partitions $\mu,\lambda$ with $\mu\subseteq\lambda$, set
\begin{equation}\label{eq:K-definition}
        K_X^Y(\mu,\lambda)
        =\frac{Q_{\lambda/\mu}(X)P_\lambda(Y)}{\Ht(X;Y)P_\mu(Y)}.
\end{equation}
If $\mu\nsubseteq\lambda$, set $K_X^Y(\mu,\lambda)=0$.
\end{definition}

\begin{remark}\label{rem:zero-denominator}
If $P_\mu(Y)=0$, the formula is interpreted on the support of the
marginal distribution.  For formal identities, one may work over the
field of fractions.  For probability statements, rows with
$P_\mu(Y)=0$ have zero initial mass under \eqref{eq:mu-marginal}, and
may be assigned arbitrarily without changing the joint law.
\end{remark}

\begin{theorem}[Markov property]\label{thm:markov}
For every $\mu$ in the support of the marginal distribution,
\begin{equation}\label{eq:row-sum-one}
        \sum_{\lambda\in\SP}K_X^Y(\mu,\lambda)=1.
\end{equation}
Moreover,
\begin{equation}\label{eq:joint-markov-factorization}
        \Prob_q^{X,Y}(\mu,\lambda)
        =\pi_{qX,Y}(\mu)K_X^Y(\mu,\lambda),
\end{equation}
where
\begin{equation}\label{eq:pi-definition}
        \pi_{qX,Y}(\mu)=\frac{Q_\mu(qX)P_\mu(Y)}{\Ht(qX;Y)}.
\end{equation}
\end{theorem}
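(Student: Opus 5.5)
The plan is to prove both claims directly from the skew Cauchy identity \eqref{eq:skew-cauchy} and the marginal formula of \cref{thm:marginals}. The two statements are essentially algebraic rearrangements, so the only real care needed concerns the support condition $P_\mu(Y)\neq0$ and the nonnegativity of the kernel entries.

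For the row-sum identity \eqref{eq:row-sum-one}, I fix $\mu$ in the support of the $\mu$-marginal \eqref{eq:mu-marginal}. By \cref{rem:zero-denominator} this means $Q_\mu(qX)P_\mu(Y)>0$, and in particular $P_\mu(Y)\neq0$. Since $K_X^Y(\mu,\lambda)=0$ unless $\mu\subseteq\lambda$, the sum over $\lambda\in\SP$ reduces to a sum over $\lambda\supseteq\mu$. I pull out the $\lambda$-independent factor $1/(\Ht(X;Y)P_\mu(Y))$ and apply \eqref{eq:skew-cauchy} with $A=X$ and $B=Y$:
\[
\sum_{\lambda\supseteq\mu}K_X^Y(\mu,\lambda)=\frac{\Ht(X;Y)P_\mu(Y)}{\Ht(X;Y)P_\mu(Y)}=1.
\]
Nonnegativity of each entry follows from the positivity assumptions in \cref{rem:formal-vs-probability}, because skew $Q$-functions and $P$-functions are monomial-positive on nonnegative variables. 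Together with the row sum, this makes $K_X^Y(\mu,\cdot)$ a probability distribution.

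For the factorization \eqref{eq:joint-markov-factorization}, I first take $\mu$ in the support and multiply \eqref{eq:pi-definition} by \eqref{eq:K-definition}. The factor $P_\mu(Y)$ cancels, leaving
\[
\frac{Q_\mu(qX)Q_{\lambda/\mu}(X)P_\lambda(Y)}{\Ht(qX;Y)\Ht(X;Y)}.
\]
By \cref{thm:normalization} this equals \eqref{eq:joint-measure} with $Z_q=\Ht(qX;Y)\Ht(X;Y)$. When $\mu\nsubseteq\lambda$, both sides vanish by convention.

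The remaining case, which is the only mildly delicate step, is $\mu$ outside the support, so that $\pi_{qX,Y}(\mu)=0$. Whatever value is assigned to the kernel row, the right-hand side is $0$. For the left-hand side, the joint weights are nonnegative and sum over $\lambda$ to the marginal $\Prob_q^{X,Y}(\mu)=0$ by \cref{thm:marginals}, so each joint weight is $0$ as well. Hence the factorization holds for all $(\mu,\lambda)$.
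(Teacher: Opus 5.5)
Your proof is correct and matches the paper's argument: the row sum is the skew Cauchy identity divided by $\Ht(X;Y)P_\mu(Y)$. The factorization comes from multiplying $\pi_{qX,Y}(\mu)$ by $K_X^Y(\mu,\lambda)$, cancelling $P_\mu(Y)$, and invoking $Z_q=\Ht(qX;Y)\Ht(X;Y)$. Your extra handling of nonnegativity, and of rows outside the support via the vanishing $\mu$-marginal, is a small gain in rigor over the paper, which treats that case only in a remark.
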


\begin{proof}
The row-sum identity is exactly the skew Cauchy identity divided by
$\Ht(X;Y)P_\mu(Y)$:
\[
        \sum_{\lambda\supseteq\mu}K_X^Y(\mu,\lambda)
        =\frac{1}{\Ht(X;Y)P_\mu(Y)}
        \sum_{\lambda\supseteq\mu}Q_{\lambda/\mu}(X)P_\lambda(Y)=1.
\]
The factorization follows by multiplying \eqref{eq:pi-definition} and
\eqref{eq:K-definition}:
\[
        \pi_{qX,Y}(\mu)K_X^Y(\mu,\lambda)
        =\frac{Q_\mu(qX)Q_{\lambda/\mu}(X)P_\lambda(Y)}{\Ht(qX;Y)\Ht(X;Y)}.
\]
By \cref{thm:normalization}, this is the joint probability
\eqref{eq:joint-measure}.
\end{proof}

\begin{theorem}[Semigroup property]\label{thm:semigroup}
For specializations $X$ and $Z$,
\begin{equation}\label{eq:semigroup}
        K_X^Y K_Z^Y=K_{X+Z}^Y.
\end{equation}
Equivalently,
\begin{equation}\label{eq:semigroup-expanded}
        \sum_{\nu\in\SP}K_X^Y(\mu,\nu)K_Z^Y(\nu,\lambda)
        =K_{X+Z}^Y(\mu,\lambda).
\end{equation}
\end{theorem}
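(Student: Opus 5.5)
Substituting the definition \eqref{eq:K-definition} into the left side of \eqref{eq:semigroup-expanded} should make the factors $P_\nu(Y)$ cancel, reducing the identity to a skew branching rule. Explicitly,
\[
        \sum_{\nu}K_X^Y(\mu,\nu)K_Z^Y(\nu,\lambda)
        =\frac{P_\lambda(Y)}{\Ht(X;Y)\Ht(Z;Y)P_\mu(Y)}
        \sum_{\mu\subseteq\nu\subseteq\lambda}Q_{\nu/\mu}(X)Q_{\lambda/\nu}(Z),
\]
where the sum runs only over $\mu\subseteq\nu\subseteq\lambda$ because $K$ vanishes off containment. Throughout I work over the field of fractions, or on rows with $P_\mu(Y)\neq0$, as in \cref{rem:zero-denominator}. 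Note that each intermediate $P_\nu(Y)$ in a denominator cancels against the numerator of the previous factor, so no zero-denominator issue arises for $\nu$.

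The prefactor is handled by the exponential form \eqref{eq:HAB}: since $p_n(X+Z)=p_n(X)+p_n(Z)$, we get $\Ht(X;Y)\Ht(Z;Y)=\Ht(X+Z;Y)$, exactly as in the proof of \cref{thm:normalization}. It therefore remains to prove the skew branching identity
\[
        Q_{\lambda/\mu}(X+Z)=\sum_{\nu}Q_{\lambda/\nu}(Z)Q_{\nu/\mu}(X).
\]
The order of the alphabets does not matter, since the skew $Q$-functions are symmetric in the combined alphabet.

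This identity is the main step. I would derive it from the half-vertex operator calculus in \eqref{eq:Gamma-matrix-elements}. We have $\Gamma_-(Z)\Gamma_-(X)=\Gamma_-(X+Z)$, because the creation operators are exponentials of odd power sums and hence commute and add their arguments. Inserting a resolution of the identity $\sum_\nu|\nu\rangle\langle\nu|$ into $\langle\lambda|\Gamma_-(Z)\Gamma_-(X)|\mu\rangle$ then gives the identity.

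To stay strictly within the stated tools, I would alternatively derive it from \eqref{eq:branching} applied to three alphabets. Expand $Q_\lambda(X+Z+W)$ in two ways:
\begin{itemize}[label={}]
        \item first as $\sum_\mu Q_{\lambda/\mu}(X+Z)Q_\mu(W)$;
        \item then by splitting off $Z$ and next $X$, which gives $\sum_{\nu,\mu}Q_{\lambda/\nu}(Z)Q_{\nu/\mu}(X)Q_\mu(W)$.
\end{itemize}
Since the $Q_\mu(W)$ are linearly independent in $W$, the coefficients can be compared. This uses the standard convention that skew $Q$-functions are defined through \eqref{eq:branching} with coassociativity; I expect this is the only subtle point, and I would cite Macdonald III.8 for it.

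Finally, substituting the skew branching identity back into the display above gives
\[
        \frac{Q_{\lambda/\mu}(X+Z)P_\lambda(Y)}{\Ht(X+Z;Y)P_\mu(Y)}=K_{X+Z}^Y(\mu,\lambda).
\]
This is \eqref{eq:semigroup-expanded}, and hence \eqref{eq:semigroup}.
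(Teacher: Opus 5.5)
Your proposal is correct and follows the paper's proof: substitute the definition of $K$, cancel $P_\nu(Y)$, use $\Ht(X;Y)\Ht(Z;Y)=\Ht(X+Z;Y)$, and apply the skew branching identity $\sum_\nu Q_{\nu/\mu}(X)Q_{\lambda/\nu}(Z)=Q_{\lambda/\mu}(X+Z)$. The paper simply quotes that branching rule, whereas you justify it, either through $\Gamma_-(Z)\Gamma_-(X)=\Gamma_-(X+Z)$ or by comparing coefficients of $Q_\mu(W)$ in $Q_\lambda(X+Z+W)$; the second argument derives coassociativity from \eqref{eq:branching} together with linear independence, so it does not need to be assumed as a separate convention.
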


\begin{proof}
Using the definition of $K_X^Y$,
\[
\begin{aligned}
        &\sum_\nu K_X^Y(\mu,\nu)K_Z^Y(\nu,\lambda)\\
        &\quad =
        \frac{P_\lambda(Y)}{\Ht(X;Y)\Ht(Z;Y)P_\mu(Y)}
        \sum_\nu Q_{\nu/\mu}(X)Q_{\lambda/\nu}(Z).
\end{aligned}
\]
The branching identity for skew $Q$-functions gives
\begin{equation}\label{eq:skew-branching}
        \sum_\nu Q_{\nu/\mu}(X)Q_{\lambda/\nu}(Z)
        =Q_{\lambda/\mu}(X+Z).
\end{equation}
Also, $\Ht(X+Z;Y)=\Ht(X;Y)\Ht(Z;Y)$.  Substitution yields
\[
        \sum_\nu K_X^Y(\mu,\nu)K_Z^Y(\nu,\lambda)
        =\frac{Q_{\lambda/\mu}(X+Z)P_\lambda(Y)}{\Ht(X+Z;Y)P_\mu(Y)}.
\]
The right-hand side is $K_{X+Z}^Y(\mu,\lambda)$.
\end{proof}

\begin{corollary}\label{cor:marginal-evolution}
The shifted Schur measure evolves by
\begin{equation}\label{eq:pi-evolution}
        \pi_{qX,Y}K_X^Y=\pi_{qX+X,Y}.
\end{equation}
\end{corollary}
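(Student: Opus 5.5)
The plan is to compute the row vector $\pi_{qX,Y}K_X^Y$ entrywise and recognize the result as the shifted Schur measure with first specialization $qX+X$. Fix $\lambda\in\SP$. Then
\[
        (\pi_{qX,Y}K_X^Y)(\lambda)
        =\sum_{\mu\subseteq\lambda}\frac{Q_\mu(qX)P_\mu(Y)}{\Ht(qX;Y)}
        \cdot\frac{Q_{\lambda/\mu}(X)P_\lambda(Y)}{\Ht(X;Y)P_\mu(Y)}.
\]
The factors $P_\mu(Y)$ cancel. By \cref{rem:zero-denominator}, rows with $P_\mu(Y)=0$ carry zero mass under $\pi_{qX,Y}$, so they contribute nothing regardless of how $K$ is defined there. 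After the cancellation, the entry becomes
\[
        \frac{P_\lambda(Y)}{\Ht(qX;Y)\Ht(X;Y)}\sum_{\mu\subseteq\lambda}Q_\mu(qX)Q_{\lambda/\mu}(X).
\]

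Next I apply \cref{prop:intermediate}, which identifies the sum with $Q_\lambda[X+qX]$. By \cref{thm:normalization}, the denominator equals $\Ht(X+qX;Y)$. The entry is therefore $Q_\lambda[X+qX]P_\lambda(Y)/\Ht(X+qX;Y)$, which is $\pi_{qX+X,Y}(\lambda)$ by definition \eqref{eq:pi-definition}, with $qX$ replaced by the specialization $qX+X$.

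Equivalently, the computation is just the $\lambda$-marginal of the factorization in \cref{thm:markov}: summing \eqref{eq:joint-markov-factorization} over $\mu$ gives $\pi_{qX,Y}K_X^Y$, and \cref{thm:marginals} evaluates that marginal.

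There is no serious obstacle. The only points needing care are the zero-denominator convention, which \cref{rem:zero-denominator} handles, and the justification for exchanging sums. Since all terms are nonnegative under \eqref{eq:convergence}, the exchange is automatic. Moreover, for fixed $\lambda$ the sum over $\mu$ is finite.
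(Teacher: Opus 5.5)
Your proof is correct and is essentially what the paper intends: the corollary is stated without proof, and it follows either from your second route (summing the factorization of \cref{thm:markov} over $\mu$ and applying \cref{thm:marginals}), or from the row $\mu=\emptyset$ of \cref{thm:semigroup} via $\pi_{A,Y}(\nu)=K_A^Y(\emptyset,\nu)$, which unwinds to exactly your entrywise computation. One small step is left implicit: when you drop the rows with $P_\mu(Y)=0$ and then write the full sum over all $\mu\subseteq\lambda$, you should note that those terms vanish from the full sum too. This holds because the weights are nonnegative and the $\mu$-marginal vanishes there, so each term $Q_\mu(qX)Q_{\lambda/\mu}(X)P_\lambda(Y)$ is $0$.
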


\section{Color-volume statistics}

Define
\begin{equation}\label{eq:B-R-definition}
        B=|\mu|,
        \qquad
        R=|\lambda|-|\mu|.
\end{equation}
Thus $B$ is the blue volume and $R$ is the red volume.

\begin{theorem}[Joint probability generating function]\label{thm:BR-pgf}
The joint probability generating function of $(B,R)$ is
\begin{equation}\label{eq:BR-pgf}
        \EE[u^Bv^R]
        =\frac{\Ht(uqX;Y)}{\Ht(qX;Y)}
        \frac{\Ht(vX;Y)}{\Ht(X;Y)}.
\end{equation}
Consequently, $B$ and $R$ are independent.
\end{theorem}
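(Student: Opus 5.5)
The plan is to use homogeneity of the (skew) $Q$- and $P$-functions to absorb $u$ and $v$ into the specialization, and then sum with the identities already available.

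\textbf{Step 1 (homogeneity).} Since $Q_{\lambda/\mu}$ is homogeneous of degree $|\lambda|-|\mu|$, scaling every variable of an alphabet $A$ by $c$ multiplies $p_n(A)$ by $c^n$. Hence
\[
        u^{|\mu|}Q_\mu(qX)=Q_\mu(uqX),
        \qquad
        v^{|\lambda|-|\mu|}Q_{\lambda/\mu}(X)=Q_{\lambda/\mu}(vX).
\]
Therefore
\[
        \EE[u^Bv^R]
        =\frac{1}{\Ht(qX;Y)\Ht(X;Y)}
        \sum_{\mu\subseteq\lambda}Q_\mu(uqX)Q_{\lambda/\mu}(vX)P_\lambda(Y).
\]

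\textbf{Step 2 (summation).} Sum first over $\lambda\supseteq\mu$ using the skew Cauchy identity \eqref{eq:skew-cauchy} with $A=vX$, $B=Y$. This gives $\Ht(vX;Y)P_\mu(Y)$. Then sum over $\mu$ with the Cauchy identity \eqref{eq:HAB} with $A=uqX$, which gives $\Ht(uqX;Y)$. Dividing by $Z_q$ from \cref{thm:normalization} yields \eqref{eq:BR-pgf}.

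Alternatively, one can repeat the argument of \cref{thm:normalization} with the pair of specializations $(vX,uqX)$, via \cref{prop:intermediate} in the form $\sum_\mu Q_\mu(uqX)Q_{\lambda/\mu}(vX)=Q_\lambda[vX+uqX]$ followed by the Cauchy identity.

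\textbf{Step 3 (independence).} The right-hand side of \eqref{eq:BR-pgf} factors as $g(u)h(v)$, where $g(1)=h(1)=1$. Setting $v=1$ identifies $g(u)=\EE[u^B]$, and setting $u=1$ identifies $h(v)=\EE[v^R]$. So $\EE[u^Bv^R]=\EE[u^B]\EE[v^R]$. Since a joint generating function determines the joint law of nonnegative integer random variables, $B$ and $R$ are independent.

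\textbf{Main obstacle: convergence.} The only delicate point is analytic. We need the series to converge for $(u,v)$ in a neighborhood of the real segments $0\le u,v\le1$, so that the term-by-term rearrangements are justified. For $0\le u,v\le 1$ all weights are nonnegative, and by \eqref{eq:convergence} the products
\[
        \Ht(uqX;Y)=\prod_{i,j}\frac{1+uqx_iy_j}{1-uqx_iy_j}
\]
and $\Ht(vX;Y)$ are finite. Tonelli's theorem then justifies the interchange of sums. The identity on $[0,1]^2$ determines the joint law, and it extends analytically to a polydisc $|u|,|v|<1+\epsilon$ by absolute convergence.
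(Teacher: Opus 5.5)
Your proof is correct and is essentially the paper's argument: homogeneity absorbs $u$ and $v$ into the specializations $uqX$ and $vX$, and Cauchy-type summation finishes. The only variation is the order of summation. Your main route sums over $\lambda$ first via the skew Cauchy identity, which gives $\Ht(vX;Y)\Ht(uqX;Y)$ directly. The paper instead sums over $\mu$ first via \cref{prop:intermediate} and then uses $\Ht(uqX+vX;Y)=\Ht(uqX;Y)\Ht(vX;Y)$, which is exactly your stated alternative; your convergence and independence remarks make explicit what the paper leaves implicit.
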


\begin{proof}
By homogeneity,
\[
        u^{|\mu|}Q_\mu(qX)=Q_\mu(uqX),
\]
and
\[
        v^{|\lambda|-|\mu|}Q_{\lambda/\mu}(X)=Q_{\lambda/\mu}(vX).
\]
Therefore the unnormalized generating function is
\[
\begin{aligned}
        &\sum_{\mu\subseteq\lambda}
        u^{|\mu|}v^{|\lambda|-|\mu|}
        Q_\mu(qX)Q_{\lambda/\mu}(X)P_\lambda(Y)\\
        &\quad=
        \sum_{\mu\subseteq\lambda}
        Q_\mu(uqX)Q_{\lambda/\mu}(vX)P_\lambda(Y).
\end{aligned}
\]
Summing first over $\mu$ gives $Q_\lambda[uqX+vX]$, and then Cauchy's
identity gives $\Ht(uqX+vX;Y)$.  Since
\[
        \Ht(uqX+vX;Y)=\Ht(uqX;Y)\Ht(vX;Y),
\]
and $Z_q=\Ht(qX;Y)\Ht(X;Y)$, we obtain \eqref{eq:BR-pgf}.  The right-hand
side is the product of a function of $u$ and a function of $v$, so $B$ and
$R$ are independent.
\end{proof}

\begin{theorem}[Cumulants]\label{thm:color-cumulants}
The $m$th cumulants of $B$ and $R$ are
\begin{equation}\label{eq:B-cumulants}
        \kappa_m(B)
        =2\sum_{\substack{n\geq1\\n\text{ odd}}}
        n^{m-1}q^n p_n(X)p_n(Y),
\end{equation}
\begin{equation}\label{eq:R-cumulants}
        \kappa_m(R)
        =2\sum_{\substack{n\geq1\\n\text{ odd}}}
        n^{m-1}p_n(X)p_n(Y).
\end{equation}
Consequently,
\begin{equation}\label{eq:total-cumulants}
        \kappa_m(|\lambda|)
        =2\sum_{\substack{n\geq1\\n\text{ odd}}}
        n^{m-1}(1+q^n)p_n(X)p_n(Y).
\end{equation}
\end{theorem}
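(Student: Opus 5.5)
The plan is to read the cumulants off the probability generating function in \cref{thm:BR-pgf}, using the exponential form \eqref{eq:HAB} of $\Ht$. Substitute $u=e^{s}$ and $v=e^{s'}$. Since $p_n(uqX)=u^nq^np_n(X)$ by homogeneity, \eqref{eq:HAB} gives
\[
\log\frac{\Ht(e^{s}qX;Y)}{\Ht(qX;Y)}
=2\sum_{\substack{n\geq1\\n\text{ odd}}}\frac{q^np_n(X)p_n(Y)}{n}\bigl(e^{ns}-1\bigr).
\]
By \cref{thm:BR-pgf}, this expression is the cumulant generating function $\log\EE[e^{sB}]$. The analogous expression with $q$ removed is $\log\EE[e^{s'R}]$.

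Next, expand $e^{ns}-1=\sum_{m\geq1}n^ms^m/m!$ and exchange the two sums. The coefficient of $s^m/m!$ is then $2\sum_{n\text{ odd}}n^{m-1}q^np_n(X)p_n(Y)$, which is \eqref{eq:B-cumulants}. The same computation without the factor $q^n$ gives \eqref{eq:R-cumulants}.

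For \eqref{eq:total-cumulants}, note that $|\lambda|=B+R$ and that $B$ and $R$ are independent by \cref{thm:BR-pgf}. Cumulants are additive over independent summands, so $\kappa_m(|\lambda|)=\kappa_m(B)+\kappa_m(R)$. Combining the two formulas gives the factor $(1+q^n)$. As a cross-check, one can set $u=v$ in \eqref{eq:BR-pgf} and use $\Ht(uqX;Y)\Ht(uX;Y)=\Ht(u(X+qX);Y)$ together with $p_n(X+qX)=(1+q^n)p_n(X)$; this gives the same result directly.

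The only real issue is analytic: the exchange of summations needs justification, and $\log\EE[e^{sB}]$ must be analytic near $s=0$. Under \eqref{eq:convergence} with finite alphabets, $p_n(X)p_n(Y)=\sum_{i,j}(x_iy_j)^n$. Hence the series in $n$ converges absolutely for $|s|<-\log\max_{i,j}(\max\{1,q\}x_iy_j)$, a positive radius. The double series in $(n,m)$ is then absolutely convergent for such $s$, so Fubini applies. Alternatively, the identities can be read as formal power series identities in $s$, which is all that the definition of cumulants requires.
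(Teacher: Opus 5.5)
Your proposal is correct and follows the paper's proof. You substitute $u=e^s$ in \cref{thm:BR-pgf}, use the exponential form of $\Ht$ to get $\log\EE[e^{sB}]=2\sum_{n\text{ odd}}(e^{ns}-1)q^np_n(X)p_n(Y)/n$, read off the $m$th cumulant (you expand $e^{ns}-1$ where the paper differentiates at $s=0$, which is equivalent), and use independence to add cumulants; your absolute-convergence remark under \eqref{eq:convergence} is a useful justification that the paper leaves implicit.
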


\begin{proof}
Set $u=e^s$ in the first factor of \eqref{eq:BR-pgf}.  Then
\[
        \log\EE[e^{sB}]
        =\log\Ht(e^s qX;Y)-\log\Ht(qX;Y).
\]
Using the exponential formula for $\Ht$,
\[
        \log\EE[e^{sB}]
        =2\sum_{\substack{n\geq1\\n\text{ odd}}}
        \frac{(e^{ns}-1)q^n p_n(X)p_n(Y)}{n}.
\]
Taking the $m$th derivative at $s=0$ gives \eqref{eq:B-cumulants}.  The
proof for $R$ is identical with $q$ removed.  Since $|\lambda|=B+R$ and
$B,R$ are independent, cumulants add, giving \eqref{eq:total-cumulants}.
\end{proof}

\subsection{Finite alphabet decomposition}

For $0\leq a<1$, define a random variable $\xi_a$ on $\ZZ_{\geq0}$ by
\begin{equation}\label{eq:xi-distribution-zero}
        \Prob(\xi_a=0)=\frac{1-a}{1+a},
\end{equation}
\begin{equation}\label{eq:xi-distribution-positive}
        \Prob(\xi_a=k)=2\frac{1-a}{1+a}a^k,
        \qquad k\geq1.
\end{equation}
When $a=0$, this definition gives the degenerate variable $\xi_0=0$.
Its probability generating function is
\begin{equation}\label{eq:xi-pgf}
        \EE[r^{\xi_a}]
        =\frac{1-a}{1+a}\frac{1+ar}{1-ar}.
\end{equation}

\begin{corollary}\label{cor:finite-alphabet-decomposition}
Let $X=(x_1,\ldots,x_M)$ and $Y=(y_1,\ldots,y_N)$.  Then
\begin{equation}\label{eq:B-independent-sum}
        B\overset{d}{=}
        \sum_{i=1}^M\sum_{j=1}^N \xi_{q x_i y_j},
\end{equation}
and
\begin{equation}\label{eq:R-independent-sum}
        R\overset{d}{=}
        \sum_{i=1}^M\sum_{j=1}^N \widetilde\xi_{x_i y_j},
\end{equation}
where all random variables on the right-hand side are independent.
\end{corollary}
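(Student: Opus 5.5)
The plan is to compare probability generating functions, using \cref{thm:BR-pgf} for the left-hand sides and \eqref{eq:xi-pgf} for the right-hand sides. Setting $v=1$ in \eqref{eq:BR-pgf} gives
\[
        \EE[u^B]=\frac{\Ht(uqX;Y)}{\Ht(qX;Y)}.
\]
For finite alphabets, \eqref{eq:HAB-product} turns this into a product over pairs $(i,j)$:
\[
        \EE[u^B]=\prod_{i=1}^M\prod_{j=1}^N
        \frac{1+uqx_iy_j}{1-uqx_iy_j}\cdot
        \frac{1-qx_iy_j}{1+qx_iy_j}.
\]
Each factor is exactly the generating function \eqref{eq:xi-pgf} with $a=qx_iy_j$ and $r=u$. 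The product of the generating functions of independent variables is the generating function of their sum. Since a distribution on $\ZZ_{\geq0}$ is determined by its generating function on $|u|\leq1$, this gives \eqref{eq:B-independent-sum}.

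The same computation with $u=1$ gives the statement for $R$. In that case each factor is the generating function of $\widetilde\xi_{x_iy_j}$, which is an independent copy of the family $\xi$ with parameter $a=x_iy_j$. Because $B$ and $R$ are independent by \cref{thm:BR-pgf}, the two families of variables can be taken jointly independent. This also gives the joint identity for $(B,R)$ if one wants it.

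The only point needing care is that each $\xi_a$ is a genuine probability distribution. This requires $0\leq a<1$, which follows from \eqref{eq:convergence}: it gives $qx_iy_j<1$ and $x_iy_j<1$. Nonnegativity of $a$ follows from $q\geq0$ and the nonnegativity of the alphabets. The degenerate case $a=0$, which arises when $q=0$ or some $x_i y_j=0$, is consistent with the convention $\xi_0=0$.

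Beyond these checks, the proof is a direct factorization.
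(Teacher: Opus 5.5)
Your proposal is correct and follows the paper's proof. You specialize the generating function of \cref{thm:BR-pgf}, factor it over pairs $(i,j)$ via \eqref{eq:HAB-product}, and recognize each factor as \eqref{eq:xi-pgf}; your extra checks that $0\leq a<1$ and your reading of $\widetilde\xi$ as an independent copy of the $\xi$ family make explicit what the paper leaves implicit.
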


\begin{proof}
For finite alphabets,
\[
        \frac{\Ht(uqX;Y)}{\Ht(qX;Y)}
        =\prod_{i,j}
        \frac{1-qx_i y_j}{1+qx_i y_j}
        \frac{1+u qx_i y_j}{1-u qx_i y_j}.
\]
Each factor is the probability generating function \eqref{eq:xi-pgf}
with $a=qx_i y_j$.  This proves the formula for $B$.  The proof for $R$
is the same with $a=x_i y_j$.
\end{proof}

\section{Conditional color distribution}

The conditional law of the blue shape given the final strict partition is
explicit.

\begin{proposition}\label{prop:conditional-shape}
For fixed $\lambda$ in the support of the $\lambda$-marginal,
\begin{equation}\label{eq:conditional-shape}
        \Prob_q(\mu\mid\lambda)
        =\frac{Q_\mu(qX)Q_{\lambda/\mu}(X)}{Q_\lambda[X+qX]},
        \qquad \mu\subseteq\lambda.
\end{equation}
Consequently,
\begin{equation}\label{eq:conditional-B-pgf}
        \EE[u^B\mid\lambda]
        =\frac{Q_\lambda[X+uqX]}{Q_\lambda[X+qX]}.
\end{equation}
In particular,
\begin{equation}\label{eq:conditional-B-mean}
        \EE[B\mid\lambda]
        =q\frac{\partial}{\partial q}\log Q_\lambda[X+qX],
\end{equation}
and
\begin{equation}\label{eq:conditional-B-variance}
        \Var(B\mid\lambda)
        =\left(q\frac{\partial}{\partial q}\right)^2
        \log Q_\lambda[X+qX].
\end{equation}
\end{proposition}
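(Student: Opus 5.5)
The conditional law follows by dividing the joint measure \eqref{eq:joint-measure} by the $\lambda$-marginal \eqref{eq:lambda-marginal} from \cref{thm:marginals}. By \cref{thm:normalization} the normalizing constants coincide: $Z_q(X,Y)=\Ht(X+qX;Y)$. The factor $P_\lambda(Y)$ also cancels, which is legitimate because $\lambda$ lies in the support, so $Q_\lambda[X+qX]P_\lambda(Y)>0$. This leaves exactly \eqref{eq:conditional-shape}. That it sums to one over $\mu\subseteq\lambda$ is \cref{prop:intermediate}.

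For the generating function \eqref{eq:conditional-B-pgf}, multiply the numerator of \eqref{eq:conditional-shape} by $u^{|\mu|}$. Homogeneity of $Q_\mu$ gives $u^{|\mu|}Q_\mu(qX)=Q_\mu(uqX)$, as in the proof of \cref{thm:BR-pgf}. Summing over $\mu\subseteq\lambda$ and applying the branching identity \eqref{eq:branching} with $A=X$ and $B=uqX$ gives $Q_\lambda[X+uqX]$. Dividing by $Q_\lambda[X+qX]$ yields the claim.

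For the mean and variance, use the standard facts
\[
\EE[B\mid\lambda]=\left.u\partial_u\log G(u)\right|_{u=1},
\qquad
\Var(B\mid\lambda)=\left.(u\partial_u)^2\log G(u)\right|_{u=1},
\]
where $G(u)=\EE[u^B\mid\lambda]$. These follow by substituting $u=e^s$ and reading off the first two cumulants, exactly as in \cref{thm:color-cumulants}. The denominator of $G$ is independent of $u$, so only $\log Q_\lambda[X+uqX]$ contributes.

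The key observation is that $Q_\lambda[X+uqX]$ depends on $u$ and $q$ only through the product $uq$. Writing it as $f(uq)$, the chain rule gives $u\partial_u f(uq)=(uq)f'(uq)=q\partial_q f(uq)$, and the same holds for the iterated Euler operators. Evaluating at $u=1$ converts $(u\partial_u)^k\log Q_\lambda[X+uqX]$ into $(q\partial_q)^k\log Q_\lambda[X+qX]$ for $k=1,2$, which gives \eqref{eq:conditional-B-mean} and \eqref{eq:conditional-B-variance}.

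The only point needing care is analytic: differentiating in $q$ requires $Q_\lambda[X+qX]$ to be a polynomial in $q$ that is positive near the given $q$. Positivity holds because $\lambda$ is in the support. Differentiability holds because $Q_\lambda[X+qX]$ is a finite sum of products $Q_\mu(qX)Q_{\lambda/\mu}(X)$, each polynomial in $q$. For $q=0$ the identities still hold, since both sides vanish or are interpreted as limits, and $B=0$ almost surely in that case.
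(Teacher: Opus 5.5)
Your proposal is correct and follows the paper's proof step for step: divide the joint weight by the $\lambda$-marginal, combine homogeneity $u^{|\mu|}Q_\mu(qX)=Q_\mu(uqX)$ with the branching identity to obtain $Q_\lambda[X+uqX]$, and use that $u$ enters only through $uq$ to replace $u\partial_u$ at $u=1$ by $q\partial_q$. Your remarks on polynomiality in $q$, positivity on the support, and the degenerate case $q=0$ add care that the paper leaves implicit, without changing the argument.
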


\begin{proof}
The conditional probability follows by dividing the joint weight by its
$\lambda$-marginal.  Then
\[
\begin{aligned}
        \EE[u^B\mid\lambda]
        &=\frac{\sum_{\mu\subseteq\lambda}
        u^{|\mu|}Q_\mu(qX)Q_{\lambda/\mu}(X)}{Q_\lambda[X+qX]}\\
        &=\frac{\sum_{\mu\subseteq\lambda}
        Q_\mu(uqX)Q_{\lambda/\mu}(X)}{Q_\lambda[X+qX]}\\
        &=\frac{Q_\lambda[X+uqX]}{Q_\lambda[X+qX]}.
\end{aligned}
\]
The conditional mean is obtained by applying $u\partial_u$ to
$\EE[u^B\mid\lambda]$ at $u=1$.  The conditional variance is obtained by
applying $(u\partial_u)^2$ to
$\log\EE[u^B\mid\lambda]$ at $u=1$.  Since the parameter $u$ appears only
through $uq$, these operations are equivalent to applying $q\partial_q$
and $(q\partial_q)^2$ to the displayed logarithm.
\end{proof}

\begin{example}\label{ex:lambda-one}
For $\lambda=(1)$, one has
\[
        Q_{(1)}[X+qX]=(1+q)Q_{(1)}(X).
\]
Thus
\[
        \Prob(B=0\mid\lambda=(1))=\frac{1}{1+q},
        \qquad
        \Prob(B=1\mid\lambda=(1))=\frac{q}{1+q}.
\]
This is the smallest example showing that $q$ is the blue color weight.
\end{example}

\section{A tableau interpretation}

The weight in \eqref{eq:joint-measure} has a direct combinatorial
interpretation in terms of shifted semistandard tableaux
\cite{Stembridge,Macdonald}.  The factor $Q_\mu(qX)$ is the generating
function for shifted semistandard tableaux of shape $\mu$ with alphabet
$qX$.  The factor $Q_{\lambda/\mu}(X)$ is the generating function for
shifted semistandard tableaux of skew shape $\lambda/\mu$ with alphabet
$X$.  The factor $P_\lambda(Y)=2^{-\ell(\lambda)}Q_\lambda(Y)$ supplies
the corresponding $P$-normalization on the final shape.

Thus the unnormalized measure enumerates triples
\begin{equation}\label{eq:tableau-triples}
        (T^{\mathrm{blue}},T^{\mathrm{red}},U),
\end{equation}
where $T^{\mathrm{blue}}$ has shape $\mu$, $T^{\mathrm{red}}$ has skew
shape $\lambda/\mu$, and $U$ has shape $\lambda$ and is weighted with the
$P$-normalization.  The blue entries carry weights from $qX$, while the
red entries carry weights from $X$.

\begin{remark}\label{rem:external-color}
The blue/red color introduced here is an external color.  It should not
be confused with the primed/unprimed markings appearing in the usual
combinatorics of shifted tableaux.
\end{remark}

\section{Pfaffian correlation functions}\label{sec:pfaffian}

The joint law is a two-time shifted Schur process.  We use Vuleti\'c's
notation for the shifted Schur process \cite{Vuletic}, the shifted analogue
of the Schur-process formalism of Okounkov and Reshetikhin \cite{OR}, in
this section.

\subsection{Embedding into the shifted Schur process}

Vuleti\'c's shifted Schur process is determined by a sequence of
specializations
\begin{equation}\label{eq:vuletic-specializations-general}
        \rho=(\rho_0^+,\rho_1^-,\rho_1^+,\ldots,\rho_T^-).
\end{equation}
We take $T=2$ and specialize
\begin{equation}\label{eq:our-specializations}
        \rho_0^+=qX,
        \qquad
        \rho_1^-=0,
        \qquad
        \rho_1^+=X,
        \qquad
        \rho_2^-=Y.
\end{equation}
Then the two visible strict partitions in the process are
\begin{equation}\label{eq:two-times}
        \lambda^{(1)}=\mu,
        \qquad
        \lambda^{(2)}=\lambda.
\end{equation}
The intermediate partition associated with $\rho_1^-=0$ is forced to be
$\lambda^{(1)}$, and the process weight becomes
\begin{equation}\label{eq:process-weight-specialized}
        Q_{\lambda^{(1)}}(qX)Q_{\lambda^{(2)}/\lambda^{(1)}}(X)
        P_{\lambda^{(2)}}(Y),
\end{equation}
which is exactly the numerator in \eqref{eq:joint-measure}.

\subsection{The two-time point process}

Define
\begin{equation}\label{eq:point-process}
        \mathfrak X(\mu,\lambda)
        =\{(k,1):k\in\mu\}\cup\{(k,2):k\in\lambda\}
        \subset \ZZ_{>0}\times\{1,2\}.
\end{equation}
For a finite subset
\[
        S=\{(x_1,t_1),\ldots,(x_n,t_n)\}
        \subset\ZZ_{>0}\times\{1,2\},
\]
define
\begin{equation}\label{eq:process-correlation}
        \rho(S)=\Prob_q^{X,Y}\{S\subseteq\mathfrak X(\mu,\lambda)\}.
\end{equation}

\subsection{The extended kernel}

For $a,b\in\{1,2\}$ and $u,v\in\ZZ$, define $K_{u,v}(a,b)$ as the
coefficient of $z^u w^v$ in
\begin{equation}\label{eq:extended-kernel-series}
        \frac{z-w}{2(z+w)}J_a(z)J_b(w),
\end{equation}
expanded in the region
\begin{equation}\label{eq:expansion-regions}
        |z|>|w|\quad\text{if }a\geq b,
        \qquad
        |z|<|w|\quad\text{if }a<b.
\end{equation}
Here Vuleti\'c's symbols are
\begin{equation}\label{eq:J1}
        J_1(z)=F_Y(z)F_{qX}(z^{-1}),
\end{equation}
\begin{equation}\label{eq:J2}
        J_2(z)=F_Y(z)F_{qX}(z^{-1})F_X(z^{-1}).
\end{equation}
Indeed, Vuleti\'c's general formula is
\begin{equation}\label{eq:Vuletic-J-general}
        J(t,z)=
        \prod_{t\leq m}F(\rho_m^-;z)
        \prod_{m\leq t-1}F(\rho_m^+;z^{-1}),
\end{equation}
and \eqref{eq:J1}--\eqref{eq:J2} follow by substituting
\eqref{eq:our-specializations}.

\begin{theorem}[Two-time Pfaffian kernel]\label{thm:two-time-pfaffian}
Let
\[
        S=\{(x_1,t_1),\ldots,(x_n,t_n)\}
        \subset\ZZ_{>0}\times\{1,2\}.
\]
Set $x_i'=x_{2n-i+1}$ and $t_i'=t_{2n-i+1}$ when an index is reflected
across the middle.  Define a $2n\times2n$ skew-symmetric matrix $M_S$ by
specifying the entries above the diagonal:
\begin{equation}\label{eq:MS-definition}
        (M_S)_{ij}=
        \begin{cases}
        K_{x_i,x_j}(t_i,t_j), & 1\leq i<j\leq n,\\
        (-1)^{x_{2n-j+1}}K_{x_i,-x_{2n-j+1}}(t_i,t_{2n-j+1}),
        & 1\leq i\leq n<j\leq2n,\\
        (-1)^{x_{2n-i+1}+x_{2n-j+1}}
        K_{-x_{2n-i+1},-x_{2n-j+1}}(t_{2n-i+1},t_{2n-j+1}),
        & n<i<j\leq2n.
        \end{cases}
\end{equation}
Then
\begin{equation}\label{eq:pfaffian-process-correlation}
        \rho(S)=\Pf(M_S).
\end{equation}
\end{theorem}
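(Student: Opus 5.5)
The plan is to deduce the statement from Vuleti\'c's general theorem for the shifted Schur process \cite{Vuletic}, applied to the specialization \eqref{eq:our-specializations}. The argument has three steps.

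\textbf{Step 1: identify the joint law with a shifted Schur process.} Vuleti\'c's process with $T=2$ assigns to a sequence $\lambda^{(1)}\supseteq\nu^{(1)}\subseteq\lambda^{(2)}$ the weight
\[
Q_{\lambda^{(1)}}(\rho_0^+)\,P_{\lambda^{(1)}/\nu^{(1)}}(\rho_1^-)\,Q_{\lambda^{(2)}/\nu^{(1)}}(\rho_1^+)\,P_{\lambda^{(2)}}(\rho_2^-),
\]
up to Vuleti\'c's exact $P$/$Q$ conventions. Under the zero specialization we have $P_{\lambda/\nu}(0)=\delta_{\lambda\nu}$, so taking $\rho_1^-=0$ forces $\nu^{(1)}=\lambda^{(1)}$. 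This recovers \eqref{eq:process-weight-specialized}, which is exactly the numerator of \eqref{eq:joint-measure}.

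For the partition function, Vuleti\'c's normalization is $\prod_{i<j}\Ht(\rho_i^+;\rho_j^-)$. Here it reduces to $\Ht(qX;Y)\Ht(X;Y)$, because every factor involving $\rho_1^-=0$ equals $1$. This agrees with \cref{thm:normalization}. Consequently the law of $(\lambda^{(1)},\lambda^{(2)})$ is precisely $\Prob_q^{X,Y}$, and the point process $\mathfrak X$ of \eqref{eq:point-process} is Vuleti\'c's point process $\{(k,t):k\in\lambda^{(t)}\}$.

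\textbf{Step 2: match the time symbols.} Substituting into \eqref{eq:Vuletic-J-general}:

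\begin{itemize}[nosep]
\item For $t=1$, the $\rho^-$ product runs over $m\geq1$ and gives $F_0(z)F_Y(z)=F_Y(z)$. The $\rho^+$ product is over $m\leq0$ and gives $F_{qX}(z^{-1})$.
\item For $t=2$, the $\rho^-$ product runs over $m\ge 2$ and gives $F_Y(z)$. The $\rho^+$ product is over $m\le1$ and gives $F_{qX}(z^{-1})F_X(z^{-1})$.
\end{itemize}

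This yields \eqref{eq:J1}--\eqref{eq:J2}. Condition \eqref{eq:convergence} guarantees a nonempty annulus $\max(q,1)\max x_i<|z|<1/\max y_j$ on which every $J_a$ is analytic. Hence the coefficient extraction with the orderings \eqref{eq:expansion-regions} is well defined, taking $|z|,|w|$ in that annulus with the prescribed order.

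\textbf{Step 3: invoke Vuleti\'c's theorem.} His Pfaffian theorem gives $\rho(S)=\Pf(M_S)$ with exactly the block structure \eqref{eq:MS-definition}. The main obstacle is making the conventions match precisely. Three points need checking against \cite{Vuletic}:

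\begin{itemize}[nosep]
\item the prefactor $\tfrac{z-w}{2(z+w)}$;
\item the sign factors $(-1)^{x}$ and the reflection $x\mapsto -x$ in the lower blocks;
\item the assignment of expansion regions to $a\ge b$ versus $a<b$.
\end{itemize}

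Each of these could differ by a transposition or a sign convention. I would first check the case $n=1$, $t=2$: marginalizing over $\mu$, the formula must reduce to the one-time shifted Schur measure kernel of \cite{Matsumoto} with symbol $F_Y(z)F_{X+qX}(z^{-1})$, which equals $J_2$ by the multiplicativity of $F$. I would then fix any residual sign by comparing with $\rho(\{(1,1)\})$ computed directly from \cref{thm:marginals}.

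Two remaining details are handled as follows. Degenerate specializations (zero variables) are allowed in Vuleti\'c's framework, since his proof uses only the Cauchy and branching identities \eqref{eq:skew-cauchy} and \eqref{eq:skew-branching}. Points with $x_i$ repeated at the same time give a vanishing Pfaffian on both sides, consistent with strictness.
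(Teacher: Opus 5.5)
Your Step 1 is correct and coincides with the paper's reduction: $P_{\lambda/\nu}(0)=\delta_{\lambda\nu}$ collapses the hidden partition, and the partition function is $\Ht(qX;Y)\Ht(X;Y)$. The gap is in Steps 2--3. The general symbol you substitute into, \eqref{eq:Vuletic-J-general}, is misquoted. In Vuleti\'c's theorem the $\rho^+$ factors sit in the denominator,
\[
J(t,z)=\frac{\prod_{t\leq m}F(\rho_m^-;z)}{\prod_{m\leq t-1}F(\rho_m^+;z^{-1})},
\]
just as in the Okounkov--Reshetikhin symbol for the ordinary Schur process \cite{OR}. With $J_1,J_2$ as written in \eqref{eq:J1}--\eqref{eq:J2}, the statement is already false for $n=1$. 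Take $X=(x_0)$, $Y=(y_0)$ and set $c=qx_0y_0$. By \cref{thm:marginals}, $\rho(\{(1,1)\})=\Prob(\mu=(1))=2c\,\frac{1-c}{1+c}$, whereas $\Pf(M_S)=-K_{1,-1}(1,1)$. Write $J_1(z)=\sum_n j_nz^n$ on the annulus. For $n\geq1$ one gets $j_n=2y_0^n\frac{1+c}{1-c}$ and $j_{-n}=2(qx_0)^n\frac{1+c}{1-c}$. Together with $\frac{z-w}{2(z+w)}=\frac12+\sum_{k\geq1}(-1)^kw^kz^{-k}$, this gives
\[
K_{1,-1}(1,1)=\tfrac12 j_1j_{-1}+\sum_{k\geq1}(-1)^kj_{1+k}j_{-1-k}
=4c\Bigl(\frac{1+c}{1-c}\Bigr)^2\frac{1-c}{2(1+c)}
=2c\,\frac{1+c}{1-c}.
\]
So the formula predicts the negative number $-2c\frac{1+c}{1-c}$.

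Hence the $\rho(\{(1,1)\})$ test you propose at the end would not reveal a ``residual sign'' that can be absorbed into conventions. Sign and magnitude are both wrong, and what has to change is the symbols themselves, namely
\[
J_1(z)=F_Y(z)F_{qX}(-z^{-1}),\qquad J_2(z)=F_Y(z)F_{qX}(-z^{-1})F_X(-z^{-1}),
\]
using $F_A(-z)=F_A(z)^{-1}$. With these, $j_{-n}$ acquires a factor $(-1)^n$ and $\frac{1+c}{1-c}$ becomes $\frac{1-c}{1+c}$. Then $K_{1,-1}(1,1)=-2c\frac{1-c}{1+c}$ and $\Pf(M_S)$ is correct. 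The rest of your argument then goes through with the same annulus, since the poles merely move from $qx_i,x_i$ to $-qx_i,-x_i$.

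The paper's own proof makes the identical substitution into \eqref{eq:Vuletic-J-general} and inherits the same error. Your Matsumoto cross-check also misstates the symbol. As \cref{rem:one-time-conventions} records, it is $F_A(z)F_B(-z^{-1})$, which points to exactly this inversion rather than confirming $J_2$. A minor point: Vuleti\'c's proof is fermionic, via Wick's theorem, not a consequence of the Cauchy and branching identities alone. Zero specializations are still harmless, simply because $\Gamma_\pm(0)$ is the identity.
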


\begin{proof}
This is Vuleti\'c's Pfaffian theorem for the shifted Schur process
\cite{Vuletic} applied
to the specializations \eqref{eq:our-specializations}.  The specialization
$\rho_1^-=0$ forces the hidden intermediate partition in Vuleti\'c's
process to coincide with the first visible partition.  Hence the process
weight is exactly \eqref{eq:process-weight-specialized}, and the
normalization is $\Ht(qX;Y)\Ht(X;Y)$ by \cref{thm:normalization}.  The
coefficient rule, the signs in \eqref{eq:MS-definition}, and the
expansion regions are precisely those in Vuleti\'c's theorem.  Substituting
\eqref{eq:our-specializations} into \eqref{eq:Vuletic-J-general} gives
\eqref{eq:J1} and \eqref{eq:J2}.
\end{proof}

\begin{remark}\label{rem:one-time-conventions}
The one-time kernel of the shifted Schur measure is often written in
Matsumoto's convention with symbol $F_A(z)F_B(-z^{-1})$.  The process
formula above uses Vuleti\'c's convention.  The two descriptions are
equivalent after the standard change of variables and sign convention,
but the formulas should not be mixed without translating conventions.
\end{remark}

\subsection{Largest parts}

For $h_1,h_2\geq0$, let
\begin{equation}\label{eq:E-h1-h2}
        E_{h_1,h_2}
        =\{(k,1):k\geq h_1+1\}
        \cup
        \{(k,2):k\geq h_2+1\}.
\end{equation}
Here the first coordinate is the part size and the second coordinate is
the time.  The event $\{\mu_1\leq h_1,\lambda_1\leq h_2\}$ is the gap
event $\mathfrak X(\mu,\lambda)\cap E_{h_1,h_2}=\varnothing$.

The scalar kernel $K_{u,v}(a,b)$ may be packaged into the usual
$2\times2$ matrix kernel
\begin{equation}\label{eq:matrix-kernel-for-fredholm}
        \mathbf K((x,t),(y,s))=
        \begin{pmatrix}
        K_{x,y}(t,s) & (-1)^yK_{x,-y}(t,s)\\
        (-1)^xK_{-x,y}(t,s) & (-1)^{x+y}K_{-x,-y}(t,s)
        \end{pmatrix}.
\end{equation}
Let
\begin{equation}\label{eq:skew-identity-kernel}
        J((x,t),(y,s))=
        \mathbf 1_{(x,t)=(y,s)}
        \begin{pmatrix}0&1\\-1&0\end{pmatrix}
\end{equation}
be the standard skew identity kernel.  Then the standard
Fredholm-Pfaffian gap formula for a Pfaffian point process gives
\cite{BorodinRains}
\begin{equation}\label{eq:largest-parts-fredholm}
        \Prob_q^{X,Y}(\mu_1\leq h_1,\lambda_1\leq h_2)
        =\Pf\bigl(J-\mathbf K\bigr)_{\ell^2(E_{h_1,h_2})}.
\end{equation}
Equivalently, \eqref{eq:largest-parts-fredholm} is the inclusion-exclusion
expansion obtained from the correlations in \cref{thm:two-time-pfaffian}.

\section{Rectangular specialization}

Let
\begin{equation}\label{eq:rectangular}
        X=(\underbrace{x,\ldots,x}_{M}),
        \qquad
        Y=(\underbrace{y,\ldots,y}_{N}),
        \qquad
        L=MN.
\end{equation}
Assume $\max\{1,q\}xy<1$.

\begin{proposition}\label{prop:rectangular-Z}
In the rectangular specialization,
\begin{equation}\label{eq:rectangular-Z}
        Z_q
        =\left(\frac{1+xy}{1-xy}\right)^L
        \left(\frac{1+qxy}{1-qxy}\right)^L.
\end{equation}
The two time symbols are
\begin{equation}\label{eq:rectangular-J1}
        J_1(z)
        =\left(\frac{1+yz}{1-yz}\right)^N
        \left(\frac{z+qx}{z-qx}\right)^M,
\end{equation}
\begin{equation}\label{eq:rectangular-J2}
        J_2(z)
        =J_1(z)
        \left(\frac{z+x}{z-x}\right)^M.
\end{equation}
\end{proposition}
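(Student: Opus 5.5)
The plan is direct substitution into formulas already established. For $Z_q$, I will use the product formula \eqref{eq:Zq-product} from \cref{thm:normalization}. In the rectangular specialization every pair $(i,j)$ with $1\le i\le M$, $1\le j\le N$ contributes the same factor
\[
        \frac{1+xy}{1-xy}\,\frac{1+qxy}{1-qxy}.
\]
There are $L=MN$ such pairs, so the product is this factor raised to the power $L$, which is \eqref{eq:rectangular-Z}. The hypothesis $\max\{1,q\}xy<1$ is exactly \eqref{eq:convergence}, so each factor is finite and positive.

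For the time symbols, I will start from the definitions \eqref{eq:J1}--\eqref{eq:J2} and the product form of $F_A$ in \eqref{eq:FA}. For $Y$ consisting of $N$ copies of $y$,
\[
        F_Y(z)=\left(\frac{1+yz}{1-yz}\right)^N .
\]
For $qX$ consisting of $M$ copies of $qx$,
\[
        F_{qX}(z^{-1})=\left(\frac{1+qxz^{-1}}{1-qxz^{-1}}\right)^M .
\]
Multiplying numerator and denominator by $z$ turns this into $\bigl((z+qx)/(z-qx)\bigr)^M$, which gives \eqref{eq:rectangular-J1}. The same computation with $x$ in place of $qx$ gives
\[
        F_X(z^{-1})=\left(\frac{z+x}{z-x}\right)^M .
\]
Then \eqref{eq:J2} reads $J_2=J_1F_X(z^{-1})$, which yields \eqref{eq:rectangular-J2}.

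There is no real obstacle here. The only point needing care is that the rewriting $(1+az^{-1})/(1-az^{-1})=(z+a)/(z-a)$ is an identity of rational functions. In the kernel \eqref{eq:extended-kernel-series}, these rational functions must still be expanded in the region where $F_X(z^{-1})$ and $F_{qX}(z^{-1})$ are power series in $z^{-1}$, that is $|z|>\max\{1,q\}x$, while $F_Y(z)$ is a power series in $z$, requiring $|z|<1/y$. This annulus is nonempty precisely because $\max\{1,q\}xy<1$.
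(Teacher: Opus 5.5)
Your proposal is correct and matches the paper's proof: you substitute $x_i=x$, $y_j=y$ into the product formula \eqref{eq:Zq-product}, and you compute $F_Y(z)$, $F_{qX}(z^{-1})$ and $F_X(z^{-1})$ from the product form of $F_A$ in \eqref{eq:FA}. Your closing remark about the annulus $\max\{1,q\}x<|z|<1/y$ is a correct addition and agrees with the paper's later comment on where the contours are placed.
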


\begin{proof}
The formula for $Z_q$ is \eqref{eq:Zq-product} with all $x_i=x$ and all
$y_j=y$.  The formulas for $J_1$ and $J_2$ follow from
\eqref{eq:J1}--\eqref{eq:J2} and
\[
        F_Y(z)=\left(\frac{1+yz}{1-yz}\right)^N,
        \qquad
        F_{qX}(z^{-1})=\left(\frac{z+qx}{z-qx}\right)^M,
\]
\[
        F_X(z^{-1})=\left(\frac{z+x}{z-x}\right)^M.
\]
\end{proof}

In this specialization, the extended kernel may be written as a double
contour integral:
\begin{equation}\label{eq:rectangular-kernel-integral}
        K_{u,v}(a,b)
        =\frac{1}{(2\pi i)^2}\oint\oint
        \frac{z-w}{2(z+w)}J_a(z)J_b(w)
        \frac{dz\,dw}{z^{u+1}w^{v+1}}.
\end{equation}
The contours are nested so that $|z|>|w|$ if $a\geq b$ and $|z|<|w|$ if
$a<b$.  They are chosen in an annulus in which the Laurent expansions
above are valid, separating the poles at $qx$ and $x$ from the pole at
$1/y$ in the usual way.

\subsection{Closed formulas for the color volumes}

Set
\begin{equation}\label{eq:a-b-parameters}
        a=qxy,
        \qquad
        b=xy.
\end{equation}
Then
\begin{equation}\label{eq:B-sum-rectangular}
        B\overset{d}{=}\sum_{r=1}^L\xi_a,
        \qquad
        R\overset{d}{=}\sum_{r=1}^L\xi_b,
\end{equation}
with the two sums independent.

\begin{proposition}\label{prop:B-distribution}
For $k\geq0$,
\begin{equation}\label{eq:B-distribution}
        \Prob(B=k)
        =\left(\frac{1-a}{1+a}\right)^L
        a^k
        \sum_{j=0}^{\min(L,k)}
        \binom{L}{j}\binom{L+k-j-1}{k-j}.
\end{equation}
The mean and variance are
\begin{equation}\label{eq:B-mean}
        \EE B=\frac{2La}{1-a^2},
\end{equation}
\begin{equation}\label{eq:B-variance}
        \Var(B)=\frac{2La(1+a^2)}{(1-a^2)^2}.
\end{equation}
The same formulas hold for $R$ after replacing $a$ by $b$.
\end{proposition}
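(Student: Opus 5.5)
The plan is to work directly with the probability generating function. In the rectangular specialization, \cref{thm:BR-pgf} and \cref{cor:finite-alphabet-decomposition} give
\[
        \EE[u^B]=\left(\frac{1-a}{1+a}\right)^L\left(\frac{1+au}{1-au}\right)^L .
\]
I would extract the coefficient of $u^k$ by expanding the two factors separately:
\[
        (1+au)^L=\sum_{j=0}^{L}\binom{L}{j}a^ju^j,
        \qquad
        (1-au)^{-L}=\sum_{i\geq0}\binom{L+i-1}{i}a^iu^i .
\]
Taking the Cauchy product and setting $i=k-j$, every term carries the common factor $a^ja^{k-j}=a^k$. The index $j$ ranges over $0\le j\le\min(L,k)$. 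This gives \eqref{eq:B-distribution} directly.

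For the edge case $L+k-j-1=-1$, that is $k=j$, I would adopt the convention $\binom{L-1}{0}=1$. This holds for $L\geq1$. The degenerate case $L=0$ should also be noted: there $B=0$, and the formula gives $\binom{-1}{0}=1$ at $k=0$, which is consistent.

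For the moments, I would use \cref{thm:color-cumulants}, or equivalently differentiate $\log\EE[e^{sB}]=L\bigl[\log(1+ae^s)-\log(1-ae^s)\bigr]+\text{const}$.

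For the first derivative at $s=0$, the first term contributes $a/(1+a)$ and the second contributes $a/(1-a)$. Their sum is
\[
        \frac{a}{1+a}+\frac{a}{1-a}=\frac{2a}{1-a^2},
\]
which gives \eqref{eq:B-mean}.

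For the second derivative, I would use $\frac{d}{ds}\frac{ae^s}{1\pm ae^s}=\frac{ae^s}{(1\pm ae^s)^2}$. At $s=0$ this gives
\[
        \frac{a}{(1+a)^2}+\frac{a}{(1-a)^2}
        =\frac{a\bigl[(1-a)^2+(1+a)^2\bigr]}{(1-a^2)^2}
        =\frac{2a(1+a^2)}{(1-a^2)^2}.
\]
Multiplying by $L$ gives \eqref{eq:B-variance}.

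As a cross-check, the cumulant series $2L\sum_{n\text{ odd}}n^{m-1}a^n$ can be summed for $m=1,2$. For $m=1$ it gives $2La/(1-a^2)$. For $m=2$ it gives $2L\,a(1+a^2)/(1-a^2)^2$, using $\sum_{n\text{ odd}}na^n=a(1+a^2)/(1-a^2)^2$.

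The statement for $R$ follows verbatim with $a$ replaced by $b$, by \eqref{eq:B-sum-rectangular}.

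None of these steps is a real obstacle. The only care needed is bookkeeping of the binomial index ranges and the boundary binomial convention.
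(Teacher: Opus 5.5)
Your proposal is correct and follows the paper's proof essentially verbatim. You expand $(1+au)^L$ and $(1-au)^{-L}$ and take the Cauchy product, as the paper does, and your differentiation of $\log\EE[e^{sB}]$ at $s=0$ is the paper's application of $r\partial_r$ and $(r\partial_r)^2$ to the logarithm of the generating function at $r=1$, under $r=e^s$; the boundary-binomial remark and the cumulant cross-check are correct extras.
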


\begin{proof}
The probability generating function of $B$ is
\[
        \EE[r^B]
        =\left(
        \frac{1-a}{1+a}\frac{1+ar}{1-ar}
        \right)^L.
\]
Expanding
\[
        (1+ar)^L=\sum_{j=0}^L\binom{L}{j}a^j r^j
\]
and
\[
        (1-ar)^{-L}=\sum_{m\geq0}\binom{L+m-1}{m}a^m r^m
\]
gives \eqref{eq:B-distribution}.  The mean and variance follow from
applying $r\partial_r$ and $(r\partial_r)^2$ to the logarithm of the
probability generating function at $r=1$.
\end{proof}

\begin{corollary}[Gaussian limit]\label{cor:gaussian-limit}
Assume $0<a,b<1$ are fixed and $L\to\infty$.  Then
\begin{equation}\label{eq:B-CLT}
        \frac{B-\dfrac{2La}{1-a^2}}
        {\sqrt{\dfrac{2La(1+a^2)}{(1-a^2)^2}}}
        \Longrightarrow N(0,1),
\end{equation}
\begin{equation}\label{eq:R-CLT}
        \frac{R-\dfrac{2Lb}{1-b^2}}
        {\sqrt{\dfrac{2Lb(1+b^2)}{(1-b^2)^2}}}
        \Longrightarrow N(0,1),
\end{equation}
and the two limiting normal random variables are independent.
\end{corollary}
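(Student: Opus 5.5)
The plan is to use the i.i.d. decomposition \eqref{eq:B-sum-rectangular}, which comes from \cref{cor:finite-alphabet-decomposition} in the rectangular case. By it, $B$ has the law of a sum of $L$ independent copies of $\xi_a$, and $R$ has the law of a sum of $L$ independent copies of $\xi_b$, with the two families independent of each other. Because the independence of $B$ and $R$ holds for every $L$ (\cref{thm:BR-pgf}), I will prove each marginal limit separately and then combine them.

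For each marginal I apply the classical Lindeberg--L\'evy central limit theorem. Since $0<a<1$ is fixed, $\xi_a$ has geometric tails $\Prob(\xi_a=k)\le 2a^k$, so all its moments are finite. Its mean and variance come from the logarithm of \eqref{eq:xi-pgf}:
\[
        \log\EE[e^{s\xi_a}]=\log(1+ae^s)-\log(1-ae^s)-\log\frac{1+a}{1-a}.
\]
Differentiating once and twice at $s=0$ gives
\[
        \EE\xi_a=\frac{a}{1+a}+\frac{a}{1-a}=\frac{2a}{1-a^2},
        \qquad
        \Var(\xi_a)=\frac{a}{(1+a)^2}+\frac{a}{(1-a)^2}=\frac{2a(1+a^2)}{(1-a^2)^2}.
\]
These agree with \eqref{eq:B-mean}--\eqref{eq:B-variance} divided by $L$. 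The variance is strictly positive because $a>0$, so the normalization is nondegenerate. The CLT then gives \eqref{eq:B-CLT}, and the same argument with $b$ in place of $a$ gives \eqref{eq:R-CLT}.

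For joint convergence, write $\widehat B_L$ and $\widehat R_L$ for the two standardized variables. Since $B$ and $R$ are independent for each $L$, the joint characteristic function factors:
\[
        \EE\bigl[e^{i(s\widehat B_L+t\widehat R_L)}\bigr]=\EE\bigl[e^{is\widehat B_L}\bigr]\,\EE\bigl[e^{it\widehat R_L}\bigr].
\]
Each factor converges to the standard Gaussian characteristic function, so the product converges to $e^{-s^2/2}e^{-t^2/2}$. By L\'evy's continuity theorem in $\mathbb{R}^2$, the pair converges to two independent $N(0,1)$ variables.

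The argument is routine. The one point needing care is that only the laws of $B$ and $R$ are identified with the i.i.d. sums, so the joint statement has to rest on the independence of $B$ and $R$ for each fixed $L$, as above. An alternative route, if preferred, is to expand the cumulants from \cref{thm:color-cumulants}: they grow linearly in $L$, so after normalization every cumulant of order $m\ge3$ is $O(L^{1-m/2})\to0$.
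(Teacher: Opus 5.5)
Your proposal is correct and follows essentially the same route as the paper: the i.i.d.\ decomposition \eqref{eq:B-sum-rectangular}, the classical central limit theorem applied to each sum, and the independence of $B$ and $R$ carried over to the limit. You also supply detail the paper leaves implicit: the computation of $\EE\xi_a$ and $\Var(\xi_a)$ with the nondegeneracy check, and the factorized two-dimensional characteristic function combined with L\'evy's continuity theorem, which justifies the joint independence claim.
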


\begin{proof}
By \eqref{eq:B-sum-rectangular}, $B$ and $R$ are sums of independent
identically distributed random variables with finite variance.  The
ordinary central limit theorem applies to each sum, and independence is
preserved in the limit.
\end{proof}


\begin{thebibliography}{99}

\bibitem{GJZ}
G. Greaves, N. Jing, and H. Zhu,
\newblock \textit{Vertex operators, infinite wedge representations, and correlation functions of the $t$-Schur measure},
\newblock arXiv:2602.14190, 2026.

\bibitem{Jing}
N. Jing,
\newblock \textit{Vertex operators, symmetric functions, and the spin group $\tilde S_n$},
\newblock J. Algebra \textbf{138} (1991), no. 2, 340--398.

\bibitem{Lee}
S.-J. Lee,
\newblock \textit{A Modified Greaves--Jing--Zhu Operator and a Shifted $t$-Gessel Formula},
\newblock arXiv:2606.22058, 2026.

\bibitem{LeeMixed}
S.-J. Lee,
\newblock \textit{Mixed Products of Modified Greaves--Jing--Zhu Operators},
\newblock arXiv:2606.28108, 2026.

\bibitem{LeeTransition}
S.-J. Lee,
\newblock \textit{Transition Matrices between Shifted $t$-Schur Bases and Cyclotomic Schur $Q$-Positivity},
\newblock arXiv:2606.28723, 2026.

\bibitem{Macdonald}
I. G. Macdonald,
\newblock \textit{Symmetric Functions and Hall Polynomials}, 2nd ed.,
\newblock Oxford University Press, 1995.

\bibitem{Matsumoto}
S. Matsumoto,
\newblock \textit{Correlation functions of the shifted Schur measure},
\newblock J. Math. Soc. Japan \textbf{57} (2005), no. 3, 619--637;
\newblock arXiv:math/0312373.

\bibitem{OR}
A. Okounkov and N. Reshetikhin,
\newblock \textit{Correlation function of Schur process with application to local geometry of a random 3-dimensional Young diagram},
\newblock J. Amer. Math. Soc. \textbf{16} (2003), no. 3, 581--603;
\newblock arXiv:math/0107056.

\bibitem{Stembridge}
J. R. Stembridge,
\newblock \textit{Shifted tableaux and the projective representations of symmetric groups},
\newblock Adv. Math. \textbf{74} (1989), no. 1, 87--134.

\bibitem{TW}
C. A. Tracy and H. Widom,
\newblock \textit{A limit theorem for shifted Schur measures},
\newblock Duke Math. J. \textbf{123} (2004), 171--208.

\bibitem{Vuletic}
M. Vuleti\'c,
\newblock \textit{Shifted Schur process and asymptotics of large random strict plane partitions},
\newblock Int. Math. Res. Not. IMRN \textbf{2007}, article ID rnm043;
\newblock arXiv:math-ph/0702068.

\bibitem{Worley}
D. R. Worley,
\newblock \textit{A Theory of Shifted Young Tableaux},
\newblock Ph.D. thesis, Massachusetts Institute of Technology, 1984.

\bibitem{Sagan}
B. E. Sagan,
\newblock \textit{Shifted tableaux, Schur $Q$-functions, and a conjecture of R. Stanley},
\newblock J. Combin. Theory Ser. A \textbf{45} (1987), no. 1, 62--103.

\bibitem{BorodinRains}
A. Borodin and E. M. Rains,
\newblock \textit{Eynard--Mehta theorem, Schur process, and their Pfaffian analogs},
\newblock J. Stat. Phys. \textbf{121} (2005), no. 3--4, 291--317;
\newblock arXiv:math-ph/0409059.

\bibitem{Schur}
I. Schur,
\newblock \textit{\"Uber die Darstellung der symmetrischen und der alternierenden Gruppe durch gebrochene lineare Substitutionen},
\newblock J. Reine Angew. Math. \textbf{139} (1911), 155--250.

\bibitem{Pragacz}
P. Pragacz,
\newblock \textit{Algebro-geometric applications of Schur $S$- and $Q$-polynomials},
\newblock in: Topics in Invariant Theory, Lecture Notes in Math. \textbf{1478}, Springer, 1991, 130--191.

\bibitem{StembridgeEnriched}
J. R. Stembridge,
\newblock \textit{Enriched $P$-partitions},
\newblock Trans. Amer. Math. Soc. \textbf{349} (1997), no. 2, 763--788.

\bibitem{Haiman}
M. Haiman,
\newblock \textit{On mixed insertion, symmetry, and shifted Young tableaux},
\newblock J. Combin. Theory Ser. A \textbf{50} (1989), no. 2, 196--225.

\bibitem{Rains}
E. M. Rains,
\newblock \textit{Correlation functions for symmetrized increasing subsequences},
\newblock arXiv:math/0006097, 2000.

\bibitem{BaikRains}
J. Baik and E. M. Rains,
\newblock \textit{Algebraic aspects of increasing subsequences},
\newblock Duke Math. J. \textbf{109} (2001), no. 1, 1--65;
\newblock arXiv:math/9905083.

\bibitem{Soshnikov}
A. Soshnikov,
\newblock \textit{Determinantal random point fields},
\newblock Russian Math. Surveys \textbf{55} (2000), no. 5, 923--975;
\newblock arXiv:math/0002099.

\bibitem{Borodin}
A. Borodin,
\newblock \textit{Determinantal point processes},
\newblock in: The Oxford Handbook of Random Matrix Theory, Oxford University Press, 2011, 231--249;
\newblock arXiv:0911.1153.

\bibitem{Okounkov}
A. Okounkov,
\newblock \textit{Infinite wedge and random partitions},
\newblock Selecta Math. (N.S.) \textbf{7} (2001), no. 1, 57--81;
\newblock arXiv:math/9907127.

\bibitem{OkounkovOlshanski}
A. Okounkov and G. Olshanski,
\newblock \textit{Shifted Schur functions},
\newblock St. Petersburg Math. J. \textbf{9} (1998), no. 2, 239--300;
\newblock arXiv:q-alg/9605042.

\end{thebibliography}
\end{document}